\documentclass[amsfonts]{amsart}

%-------Packages---------
\usepackage{amssymb,amsfonts}
\usepackage[all,arc]{xy}
\usepackage{enumerate}
\usepackage{mathrsfs}
\usepackage[autostyle]{csquotes}
\usepackage{amscd}
%\usepackage{authblk}
%--------Theorem Environments--------
%theoremstyle{plain} --- default
\newtheorem{thm}{Theorem}[section]

\newtheorem{prop}[thm]{Proposition}

\theoremstyle{definition}
\newtheorem{defn}[thm]{Definition}

\theoremstyle{remark}

\makeatletter
\let\c@equation\c@thm
\makeatother
\numberwithin{equation}{section}

\bibliographystyle{plain}

%--------Meta Data: Fill in your info------

\title{Effective Bounds on Topological Types of Real Algebraic and Semialgebraic Sets}

\author{Kartoue Mady Demdah$^{\star}$}

\address{
\begin{quote}
$^{\star}$AI Team, Olameter Inc.
\newline 2000 McGill College Ave.
\newline Suite 500, Montreal, Quebec H3A 3H3
\newline CANADA
\newline \emph{Email address: kartoue@gmail.com}
\end{quote}
}

\author{Ibrahim Nonkane$^{\dagger}$}
\address{
		\begin{quote}
		$^{\dagger}$Department of Mathematics
		\newline University Thomas Sankara
		\newline BURKINA FASO
		\newline \emph{Email address: nonkane\_ibrahim@yahoo.fr}
		\end{quote}}

%\email{nonkane\_ibrahim@yahoo.fr}

\keywords{Algebraic Sets, Semialgebraic Sets, Effectiveness}
\subjclass[2020]{14P05, 14P10, 14P25}

\date{}

\begin{document}

\maketitle
\begin{abstract}
In this paper, we revisit the problem of classifying real algebraic and semialgebraic sets by their topological types, focusing on establishing the effectiveness of bounds rather 
than deriving new quantitative estimates. Building on Hardt's theorem and leveraging a model-theoretic framework,  we prove the existence of effective  bounds on the number of distinct topological types for real algebraic sets defined by polynomials of bounded degree. While less precise than previously known doubly exponential bounds derived from methods such as Stratified 
Cylindrical Algebraic Decomposition, our results are obtained with minimal additional work, emphasizing simplicity and uniformity. 

Additionally, we establish the existence of an effective bound on the complexity of the semialgebraic homeomorphisms that describe representatives of the topological equivalence classes, offering a novel perspective 
on their explicit classification. These findings complement existing results by demonstrating the effectiveness of uniform bounds within a logical and geometric framework. 
The paper concludes with an extension of these results to semialgebraic sets of bounded complexity, further emphasizing the uniformity of the approach across varying real closed 
fields.

\end{abstract}

%\tableofcontents

\section*{Introduction}

The study of real algebraic sets, defined as the solution sets of polynomial equations over the real numbers, is a cornerstone of real algebraic geometry. 
A central challenge in this field is to understand the topological complexity of these sets and classify them up to homeomorphism, providing a comprehensive 
understanding of their structural diversity. This classification problem is fundamental both to theoretical mathematics and to practical applications in optimization, 
robotics, and data analysis, where real algebraic and semialgebraic sets naturally arise.

Classical results, such as the Oleinik-Petrovsky, Thom, and Milnor bounds, have offered profound insights into the global topological complexity of real algebraic sets 
by estimating quantities such as Betti numbers or Euler characteristics (cf. \cite{OP}, \cite{T}, \cite{BR}, \cite{BPR}). However, these aggregate measures of complexity 
do not directly address a finer question: how many distinct topological types can occur within a given family of real algebraic or semialgebraic sets? This question has been partially answered in work such as that of Basu, Pollack, and Roy, who noted in Algorithms in Real Algebraic Geometry (cf. \cite{BPR}, Remark 11.46.c) that the bound for the number of topological types is polynomial in d and doubly exponential in the number of monomials of degree d in k variables, derived using  their Stratified Cylindrical Algebraic Decomposition. Similarly, Basu and Vorobjov, in their work on homotopy types, highlight the doubly exponential bounds for topological types while obtaining simply exponential bounds for the number of homotopy types.

In this paper, we revisit the problem of bounding the number of topological types, adopting a distinct approach based on Hardt's theorem. By leveraging the model-theoretic 
framework of real closed fields and tools such as quantifier elimination, we demonstrate that the number of topological types of real algebraic sets defined by polynomials 
of bounded degree d can be effectively bounded. While the bounds established in this work are only recursive, and thus considerably less precise than the doubly exponential 
bounds cited above, they are derived almost “without further work” from Hardt's theorem, emphasizing the simplicity and elegance of this approach. Furthermore, we extend 
these results by providing a recursive bound on the description of the semialgebraic homeomorphisms representing each topological equivalence class.

The main contributions of this paper are twofold: first, we establish effective bounds on the number of topological types of real algebraic and semialgebraic sets of bounded complexity; and second, we derive the existence of an effective bound on the complexities of the semialgebraic homeomorphisms associated with the representatives of these topological equivalence classes. 

The structure of this article is as follows. In Section 1, we provide the necessary background on semialgebraic geometry and the model theory of real closed fields, with a detailed exposition of the Tarski-Seidenberg theorem. Section 2 establishes the effectiveness of bounds on the number of topological types of real algebraic sets defined by polynomials with a bounded degree. In addition, we extend these results by deriving the existence of an effective bound on the complexities of the semialgebraic homeomorphisms associated with the representatives of these topological equivalence classes.  Section 3 extends these results to semialgebraic sets of bounded complexity, demonstrating that these results hold uniformly across varying real closed fields.

\section{Preliminaries and Theoretical Framework}

This section introduces the basic concepts from real algebraic geometry, semialgebraic sets, and model theory of real closed fields, which provide the theoretical framework for analyzing the topological complexity of real algebraic sets. We begin by defining algebraic and semialgebraic sets, present essential results such as the Tarski-Seidenberg theorem, and  Tarski's decidability theorem.

We start by recalling the definitions of real algebraic and semialgebraic sets, which form the core objects of our study.

\begin{defn} \rm
A \emph{real algebraic set} \( Z \subseteq \mathbb{R}^n \) is the zero locus of a finite collection of polynomials \( f_1, \ldots, f_m \in \mathbb{R}[X_1, \ldots, X_n] \):
\[
Z = \{ x \in \mathbb{R}^n \mid f_1(x) = 0, \ldots, f_m(x) = 0 \}.
\]
The \emph{degree} of \( Z \) is defined as the maximum of the degrees of the polynomials \( f_1, \ldots, f_m \).
\end{defn}

We recall in the following the concept of semialgebraic subset and a notion of complexity for semialgebraic sets, which associates to each such set a pair of integers that provide information about the number of polynomials defining the set and the maximal degree of these polynomials. Formally, we define:
\begin{defn}\rm
Let \( R \) be a real closed field.
\begin{itemize}
    \item A set \( S \subseteq R^n \) is called \emph{semialgebraic} if it can be expressed as a finite Boolean combination of sets defined by polynomial equations and inequalities. Specifically, \( S \) can be written as
    \[
    S = \bigcup_{i=1}^s \bigcap_{j=1}^{k_i} \{ x \in R^n \mid f_{ij}(x) \ast_{ij} 0 \},
    \]
    where \( f_{ij} \in R[X_1, \ldots, X_n] \) are polynomials, and each \( \ast_{ij} \) is either \( = \) or \( > \), for \( i = 1, \ldots, s \) and \( j = 1, \ldots, k_i \).

    \item A semialgebraic set \( S \subseteq R^n \) is said to have \emph{complexity} at most \( (p, q) \) if it can be described using at most \( p \) polynomial conditions (i.e., \( \sum_{i=1}^s k_i \leq p \)) and all the polynomials \( f_{ij} \) have degree at most \( q \).

    \item The \emph{complexity} of a semialgebraic set \( S \subseteq R^n \) is defined as the minimal pair \( (p, q) \) (with respect to the lexicographic order) such that \( S \) admits a description of the form above, with at most \( p \) polynomial conditions and polynomials of degree at most \( q \).
\end{itemize}

\end{defn}

We now turn to some basic notions from the model theory of real closed fields, which will be crucial for establishing the uniformity and effectiveness results in our subsequent analysis.

\begin{defn} \rm
A \emph{real closed field} \( R \) is an ordered field satisfying two key properties:
\begin{enumerate}
	\item Every positive element in \( R \) has a square root.
	\item Every polynomial of odd degree with coefficients in \( R \) has a root in \( R \).
\end{enumerate}
The field of real numbers \( \mathbb{R} \) is the canonical example of a real closed field.
\end{defn}

To present Hardt's Semialgebraic Trivialization Theorem, we first revisit the concept of semialgebraic trivialization of a semialgebraic continuous map.
\begin{defn}\rm
Let \( f: A \rightarrow B \) be a continuous semialgebraic map. We say that \( f \) is \emph{semialgebraically trivial over a semialgebraic subset} \( C \subset B \) if there exist a semialgebraic set \( F \) and a semialgebraic homeomorphism
\(
h: f^{-1}(C) \rightarrow C \times F
\)
such that the composition of \( h \) with the projection map \( \mathrm{pr}_1 : C \times F \rightarrow C \) is equal to the restriction of \( f \) to \( f^{-1}(C) \). In other words, \( \mathrm{pr}_1 \circ h = f|_{f^{-1}(C)} \).

\end{defn}
We now state Hardt’s Semialgebraic Trivialization Theorem. A comprehensive proof valid over any real closed field is provided in [BCR, p. 221].

\begin{thm}[\textbf{Hardt’s Semialgebraic Trivialization Theorem}] 
Let \( A \subset \mathbb{R}^n \) and \( B \subset \mathbb{R}^m \) be semialgebraic sets, and let \( f: A \rightarrow B \) be a semialgebraic map. Then there exists a finite semialgebraic partition of \( B \) into semialgebraic sets \( B_1, \ldots, B_k \) such that \( f \) is semialgebraically trivial over each \( B_i \).

Moreover, given finitely many semialgebraic subsets \( A_1, \ldots, A_h \subset A \), we can choose the trivializations \( h_i \) over \( B_i \) to be compatible with all the subsets \( A_j \).

\end{thm}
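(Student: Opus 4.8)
The plan is to reduce the statement to the triviality of coordinate projections and then to prove that case by induction on the fiber dimension, using cylindrical algebraic decomposition as the engine. First I would replace $A$ by the graph $\Gamma_f = \{(x,f(x)) : x \in A\} \subseteq \mathbb{R}^n \times \mathbb{R}^m$; since $f$ is semialgebraic, $\Gamma_f$ is a semialgebraic set, the portions $\Gamma_f|_{A_j}$ lying over the distinguished subsets are semialgebraic, and under the homeomorphism $A \cong \Gamma_f$ the map $f$ becomes the projection $\pi : \Gamma_f \to \mathbb{R}^m$ onto the last $m$ coordinates. A trivialization of $\pi$ over a partition of $B$, compatible with the sets $\Gamma_f|_{A_j}$, pulls back verbatim to the desired trivialization of $f$. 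Thus it suffices to treat projections $\pi : S \to \mathbb{R}^m$ with $S \subseteq \mathbb{R}^{N} \times \mathbb{R}^m$, and I would induct on $N$.

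For the base step $N = 1$, I would apply cell decomposition, a consequence of the quantifier-elimination machinery of Section 1, to obtain a finite semialgebraic partition of $B$ into pieces $B_i$ over each of which the fiber $S \cap (\mathbb{R} \times \{b\})$ has a constant combinatorial type: there are continuous semialgebraic section functions $\xi_1 < \cdots < \xi_\ell : B_i \to \mathbb{R}$ whose graphs and the open bands between them describe $S$ and, simultaneously, every distinguished subset. Fixing a base point $b_0 \in B_i$ and setting $F = \pi^{-1}(b_0)$, I would define the trivialization $h : \pi^{-1}(B_i) \to B_i \times F$ by $(t,b) \mapsto (b, \theta_b(t))$, where $\theta_b : \mathbb{R} \to \mathbb{R}$ is the piecewise-affine, orientation-preserving reparametrization sending each $\xi_j(b)$ to $\xi_j(b_0)$ and affine on the two unbounded ends. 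This $h$ is a semialgebraic homeomorphism satisfying $\mathrm{pr}_1 \circ h = \pi$, and because it carries bands to bands and graphs to graphs it respects all the distinguished subsets at once.

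For the inductive step I would factor the projection $\pi : \mathbb{R}^{N} \times \mathbb{R}^m \to \mathbb{R}^m$ as $\pi = \pi' \circ \rho$, where $\rho$ forgets the last fiber coordinate onto $\mathbb{R}^{N-1} \times \mathbb{R}^m$ and $\pi' : \mathbb{R}^{N-1} \times \mathbb{R}^m \to \mathbb{R}^m$. The $N=1$ construction, applied to $\rho|_S$ over the base $\mathbb{R}^{N-1} \times \mathbb{R}^m$ and made compatible with the given subsets, trivializes the top coordinate over a finite partition of $\mathbb{R}^{N-1} \times \mathbb{R}^m$. Feeding the pieces of that partition into the induction hypothesis for $\pi'$ as additional distinguished subsets, I would obtain a trivializing partition of $B$ downstairs; pulling it back through $\rho$ and composing the two families of fiberwise homeomorphisms produces a single semialgebraic homeomorphism over each piece of $B$ that commutes with $\pi$ and respects every distinguished subset.

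The main obstacle is the compatibility bookkeeping in the inductive step: one must guarantee that the section functions remain continuous and that the full combinatorial type of the fiber — the number and ordering of the $\xi_j$ together with the membership of each graph and each band in each distinguished set — stays constant over each piece, and that the two stacked trivializations actually glue into one homeomorphism commuting with $\pi$. This is exactly what forces the successive refinements of the partition, and it is where the simultaneous compatibility with all the $A_j$ is paid for. Semialgebraicity of every map constructed is automatic from Tarski--Seidenberg, and since none of the constructions refer to the specific field $\mathbb{R}$, the same argument runs over any real closed field, which is the uniformity later exploited in the paper.
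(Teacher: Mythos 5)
The paper does not actually prove this theorem: it is quoted as a known result with a pointer to [BCR, p.~221], so there is no internal proof to compare against. Judged on its own terms, your proposal follows the standard line of argument (reduce to a coordinate projection via the graph, induct on the fiber dimension using cylindrical decomposition), and the reduction step and the base case $N=1$ are correct; in particular you rightly define the fiberwise reparametrization $\theta_b$ on all of $\mathbb{R}$, which is what makes compatibility with the distinguished subsets come for free.

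The inductive step, however, contains a genuine gap that you acknowledge but dismiss as ``compatibility bookkeeping.'' It is the mathematical heart of the theorem. When you trivialize the last fiber coordinate over the cells $C_1,\ldots,C_r$ of $\mathbb{R}^{N-1}\times\mathbb{R}^m$ and then trivialize $\pi'$ over $B$ compatibly with the $C_j$, the resulting stacked map on $\pi^{-1}(B_i)$ is built from fiberwise homeomorphisms $\theta_{(b,y)}$ that are defined \emph{cell by cell}, each normalized to a different base point $c_j\in C_j$. As $(b,y)$ tends to the boundary between $C_j$ and an adjacent cell $C_{j'}$ inside a single fiber $\pi^{-1}(b)$, these reparametrizations need not converge to the one used on $C_{j'}$ --- the section functions $\xi_1<\cdots<\xi_\ell$ can collapse, escape to infinity, or change in number --- so the composite is in general not continuous across cell boundaries and is therefore not ``a single semialgebraic homeomorphism over each piece of $B$'' as claimed. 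Repairing this requires a substantive additional idea, not refinement of the partition: either one chooses the cylindrical decomposition so that the $\xi_j$ extend continuously to cell closures and constructs the trivializations inductively so as to agree on those closures (the route taken in [BCR]), or one routes the whole proof through the semialgebraic triangulation theorem. Without one of these ingredients the induction does not close.
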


A fundamental result in real algebraic geometry is the \emph{Tarski-Seidenberg theorem}, which ensures that the class of semialgebraic sets is closed under projections. This theorem plays a crucial role in our study, allowing us to reduce high-dimensional problems to lower-dimensional ones while preserving semialgebraicity.

\begin{thm}[\textbf{Tarski-Seidenberg Theorem}]

Let \( S \subseteq \mathbb{R}^{k+m} \) be a semialgebraic set. Then its projection onto \( \mathbb{R}^k \),
\[
\pi(S) = \{ x \in \mathbb{R}^k \mid \exists y \in \mathbb{R}^m \, \text{ such that } (x, y) \in S \},
\]
is also a semialgebraic set.
\end{thm}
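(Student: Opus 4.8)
The plan is to derive the Tarski--Seidenberg theorem from quantifier elimination in the theory of real closed fields, in keeping with the model-theoretic framework of the paper. First I would recall that, in the language of ordered rings $\mathcal{L} = \{+, -, \cdot, 0, 1, <\}$, a subset of $\mathbb{R}^n$ is semialgebraic precisely when it is the solution set of a quantifier-free $\mathcal{L}$-formula: the atomic formulas are polynomial equalities and inequalities, and the finite Boolean combinations thereof correspond exactly to the finite unions and intersections appearing in the definition of a semialgebraic set. Thus semialgebraicity is the same notion as quantifier-free definability, and this dictionary is what translates the geometric statement into a logical one.

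With this correspondence in hand, let $\phi(x_1,\dots,x_k,y_1,\dots,y_m)$ be a quantifier-free formula defining $S$. The projection $\pi(S)$ is then defined by the formula $\psi(x) := \exists y_1 \cdots \exists y_m\, \phi(x,y)$. The crux is to convert $\psi$, which contains existential quantifiers, back into a quantifier-free formula. This is exactly what quantifier elimination for real closed fields provides: there exists a quantifier-free $\mathcal{L}$-formula $\tilde\psi(x)$ equivalent to $\psi(x)$ over every real closed field, and in particular over $\mathbb{R}$. The set defined by $\tilde\psi$ is semialgebraic by the dictionary above, which completes the argument.

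The substantive content therefore lies in establishing quantifier elimination itself, and this is where I expect the main obstacle. It suffices to eliminate a single existential quantifier $\exists y$ from a conjunction of sign conditions imposed on polynomials $f_1(x,y),\dots,f_s(x,y)$, viewed as polynomials in $y$ with coefficients in $\mathbb{R}[x]$; the general case then follows by induction on $m$ and by distributing $\exists$ over disjunctions. For a fixed $x$, whether some $y$ simultaneously satisfies the prescribed signs of the $f_i(x,\cdot)$ depends only on the ordered configuration of the real roots of these univariate polynomials and on the signs of the $f_i$ on the intervals those roots determine. The technical heart is to show that this configuration data is itself controlled semialgebraically in the parameter $x$: one must track how roots coincide, appear, or vanish as $x$ varies, using derivatives together with resultants and subresultant coefficients (equivalently, a sign-determination procedure such as the one underlying Thom's lemma). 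Each realizable configuration yields a Boolean combination of polynomial conditions on $x$, and the finite disjunction over all configurations admitting a valid $y$ produces the desired quantifier-free $\tilde\psi(x)$.

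Finally I would remark that the entire argument is uniform in the underlying real closed field, since neither the combinatorics of the sign configurations nor the subresultant computations depend on the particular field; this uniformity is precisely what the later sections exploit when transferring topological-type bounds across varying real closed fields.
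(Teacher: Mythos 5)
The paper does not prove this theorem; it states it as a classical result and relies on the literature (the standard reference here is [BCR]), so there is no in-paper proof to compare against. Your proposal is a correct outline of the standard model-theoretic proof: the dictionary between semialgebraic subsets of $\mathbb{R}^n$ and sets defined by quantifier-free formulas in the language of ordered rings is accurate, the reduction of the projection statement to eliminating a block of existential quantifiers is exactly right, and the further reduction to eliminating a single $\exists y$ from a conjunction of sign conditions (by induction on $m$ and distribution over disjunctions) is the standard strategy. You also correctly identify where all the real work lives, namely showing that the configuration of roots and intermediate signs of the $f_i(x,\cdot)$ is itself determined by polynomial conditions on $x$, via subresultants and sign determination. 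That step is asserted with pointers to the right tools rather than carried out, so what you have is a proof architecture rather than a complete proof; but since the theorem is a cited classical result in this paper, that level of detail is appropriate, and nothing in your outline is circular or incorrect. Your closing remark about uniformity over the base real closed field is also apt: it is precisely this uniformity of quantifier elimination that underlies the Transfer Principle (Theorem \ref{Principle}) and the effectiveness arguments in Sections 2 and 3.
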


The theorem below presents the Transfer Principle, which follows from the Tarski-Seidenberg Theorem (see [BCR, Proposition 5.2.3] for details). We restate it here as follows.

\begin{thm} \label{Principle}
Let \(\Phi\) be a first-order formula in the theory of real closed fields with coefficients in \(\mathbb{Z}\). Then \(\mathbb{R} \models \Phi\) if and only if \(R \models \Phi\) for any real closed field \(R\).
\end{thm}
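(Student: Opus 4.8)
The plan is to derive the transfer principle from the completeness of the first-order theory of real closed fields, which in turn rests on the quantifier-elimination content of the Tarski-Seidenberg theorem stated above. First I would fix the language \(\mathcal{L} = \{+, -, \cdot, 0, 1, <\}\) of ordered rings and let \(\mathrm{RCF}\) denote the \(\mathcal{L}\)-theory axiomatized by the ordered field axioms together with the two defining properties recalled in the definition above: every positive element has a square root, and every odd-degree polynomial has a root. Both \(\mathbb{R}\) and an arbitrary real closed field \(R\) are models of \(\mathrm{RCF}\) by definition, so the task reduces to showing that \(\mathrm{RCF}\) is \emph{complete}, i.e.\ that it decides every \(\mathcal{L}\)-sentence.

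The first key step is to reformulate the Tarski-Seidenberg theorem in its logical guise: the fact that the projection of a semialgebraic set is semialgebraic is exactly the statement that \(\mathrm{RCF}\) admits quantifier elimination. Concretely, for every \(\mathcal{L}\)-formula \(\Phi(\bar{x})\) there is a quantifier-free \(\mathcal{L}\)-formula \(\psi(\bar{x})\) with \(\mathrm{RCF} \vdash \forall \bar{x}\,\bigl(\Phi(\bar{x}) \leftrightarrow \psi(\bar{x})\bigr)\); the passage between the geometric and syntactic formulations is the standard dictionary identifying definable sets with semialgebraic sets and existential quantifiers with projections, applied inductively on quantifier depth.

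The second key step is to leverage quantifier elimination to obtain completeness. Applying it to the sentence \(\Phi\)—a formula with no free variables and coefficients in \(\mathbb{Z}\)—produces an equivalent quantifier-free sentence \(\psi\), which is a Boolean combination of atomic formulas of the form \(t = 0\) and \(t > 0\) in which each term \(t\) is built from the constants \(0, 1\) using the ring operations, and hence denotes a fixed integer in every model. Since every real closed field has characteristic zero and therefore contains a unique copy of the ordered prime field \(\mathbb{Q}\), each such atomic sentence receives the same truth value in \(\mathbb{R}\) and in \(R\). Consequently \(\psi\), and with it \(\Phi\), takes the same truth value in both fields, which is precisely the desired equivalence \(\mathbb{R} \models \Phi \iff R \models \Phi\).

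I expect the main conceptual obstacle to be the careful justification of the syntactic–geometric dictionary in the first step: namely, that the projection statement of Tarski-Seidenberg genuinely yields elimination of a single existential quantifier over an arbitrary quantifier-free matrix, uniformly over all real closed fields, so that full quantifier elimination follows by induction. Once this is secured, the absoluteness of quantifier-free sentences over the prime field is routine, and the conclusion is immediate; this is also the form in which the principle is recorded as [BCR, Proposition 5.2.3].
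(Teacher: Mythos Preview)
Your argument is correct and is exactly the standard route: quantifier elimination (the logical form of Tarski--Seidenberg) reduces any \(\mathcal{L}\)-sentence to a quantifier-free one, and quantifier-free sentences are absolute because every real closed field contains the ordered prime field \(\mathbb{Q}\). The paper does not supply its own proof of this statement; it merely records it as a consequence of Tarski--Seidenberg and defers to [BCR, Proposition~5.2.3], whose argument is precisely the one you sketch.
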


We conclude this section by recalling a result of Tarski on the decidability of the theory of real closed fields. This result provides a solid foundation for the effective analysis of semialgebraic sets in a model-theoretic context. For a detailed proof, please refer to Theorem 6.4 in \cite{P}.

\begin{thm}[\textbf{Tarski's Decidability Theorem}]\label{decidability}
The first-order theory of real closed fields is decidable. In particular, there is an algorithm that, given any sentence \( \sigma \) in the language of real closed fields, determines whether \( \sigma \) is true in the field of real numbers \( \mathbb{R} \).
\end{thm}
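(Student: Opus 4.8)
The plan is to deduce decidability from an effective form of quantifier elimination for the theory $\mathrm{RCF}$ of real closed fields, using the Tarski--Seidenberg theorem as the geometric engine. A sentence $\sigma$ in the language $\{0,1,+,\cdot,<\}$ carries no free variables, so if we can replace $\sigma$ by a logically equivalent quantifier-free sentence $\sigma'$, then $\sigma'$ is a Boolean combination of atomic formulas of the form $t = 0$ or $t > 0$ in which each term $t$ is built from $0$ and $1$ alone, hence evaluates to a fixed integer. Such a ground sentence can be checked mechanically. Thus the entire problem reduces to exhibiting an \emph{algorithm} that, given any formula, returns an equivalent quantifier-free one.

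First I would reduce quantifier elimination to the elimination of a single existential quantifier. Any formula can be put in prenex form, and a block $\forall x\,\psi$ is $\neg\exists x\,\neg\psi$, so it suffices to eliminate $\exists x$ from $\exists x\,\theta(x,\bar y)$ with $\theta$ quantifier-free. Writing $\theta$ in disjunctive normal form and distributing the existential quantifier over the disjunction, one is left with the core task: given finitely many polynomials $p_i(x,\bar y)$ and a prescribed system of sign conditions $p_i \ast_i 0$, produce a quantifier-free formula in $\bar y$ that holds precisely when there exists $x$ realizing that system. Geometrically this is exactly the projection along the $x$-axis of the semialgebraic set cut out by the sign conditions; the Tarski--Seidenberg theorem guarantees that the projection is semialgebraic, i.e.\ that \emph{some} quantifier-free defining formula exists. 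What must be added is effectiveness: the quantifier-free formula should be \emph{computable} from the $p_i$.

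The main obstacle is precisely this passage from existence to computability. To make it constructive I would treat the $p_i(x,\bar y)$ as univariate polynomials in $x$ with coefficients in $\mathbb{Z}[\bar y]$ and express the solvability of the sign system through sign conditions on the coefficients together with the signs of a finite auxiliary family of polynomials in $\bar y$ — leading coefficients, (sub)resultants, and discriminants — that control the number and relative position of the real roots. A real-root-counting device such as Sturm's theorem or the subresultant algorithm then translates ``there is a real $x$ with the prescribed signs'' into a finite disjunction of sign conditions on these auxiliary polynomials, each explicitly computable from the $p_i$. Iterating the single-variable elimination strips away all quantifiers and yields the desired algorithm. Applying it to $\sigma$ produces the ground sentence $\sigma'$; by the Transfer Principle (Theorem \ref{Principle}) the truth of $\sigma$ in $\mathbb{R}$ coincides with its truth in every real closed field, so evaluating $\sigma'$ decides $\sigma$ and completes the argument.
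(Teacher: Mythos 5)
The paper does not prove this theorem at all --- it is quoted with a pointer to Theorem 6.4 of Prestel's notes \cite{P} --- so there is no in-paper argument to compare against. Your sketch is the standard proof (effective quantifier elimination for $\mathrm{RCF}$, then evaluation of the resulting ground sentence), which is essentially what the cited source carries out, and the logical skeleton is sound: prenex form, reduction of $\forall$ to $\neg\exists\neg$, disjunctive normal form, reduction to a single sign system in one variable, and the observation that a quantifier-free sentence over $\mathbb{Z}$ evaluates mechanically. You also correctly identify the crux: the paper's Tarski--Seidenberg theorem, as stated, only asserts \emph{existence} of a quantifier-free description of the projection, and decidability needs the \emph{computable} version.

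That said, the step you describe in one sentence --- translating \enquote{there exists a real $x$ realizing the prescribed sign conditions on $p_1,\dots,p_m$} into an explicitly computable Boolean combination of sign conditions on resultants, subresultants, and leading coefficients in $\bar y$ --- is not an auxiliary detail; it \emph{is} Tarski's algorithm, and everything else in the proof is routine bookkeeping around it. Naming Sturm's theorem is the right instinct for a single polynomial, but handling a simultaneous sign system for several polynomials uniformly in the parameters $\bar y$ (including the case distinctions on which leading coefficients vanish) requires a genuine construction such as Tarski's original sign-determination scheme or the Cohen--H\"ormander method, and a complete proof would have to supply it. One further remark: the closing appeal to the Transfer Principle is unnecessary --- once $\sigma'$ is a quantifier-free sentence whose atomic parts are integer (in)equalities, its truth value is the same in every ordered field, and the equivalence $\sigma \leftrightarrow \sigma'$ is already provable from the $\mathrm{RCF}$ axioms, which is exactly what gives completeness and hence decidability of the full theory, not merely of truth in $\mathbb{R}$.
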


This theorem allows us to algorithmically verify properties of semialgebraic sets, making it a powerful tool for establishing the effectiveness of various bounds in real algebraic geometry.

\section{Effectiveness of the Number of Topological Types for Real Algebraic Sets}
In this section, we aim to establish effective bounds on two key aspects: (1) the number of topological types of algebraic sets defined by polynomial equations of bounded degree, and (2) the complexities of semialgebraic homeomorphisms between such algebraic sets. Specifically, we prove that for any positive integers \( n \) and \( d \), there exist computable bounds \( p(n, d) \), \( t(n, d) \), and \( u(n, d) \) such that every algebraic subset of \( \mathbb{R}^n \) defined by polynomials of degree at most \( d \) can be semialgebraically classified using a finite list of model sets \( V_1, \ldots, V_p \) and corresponding homeomorphisms whose complexities are bounded by \( (t(n, d), u(n, d)) \).

To achieve this, we first recall essential definitions related to effectiveness and uniform definability in the theory of real closed fields. We then present key propositions that provide the foundational tools necessary for proving our main result.

\begin{defn}\rm
A family of first-order formulas \(\mathcal{F}(n_1, \ldots, n_r)\) in the theory of real closed fields, indexed by tuples \((n_1, \ldots, n_r) \in \mathbb{N}^r\), is called effective if there exists an algorithm that, given \( n_1, \ldots, n_r \in \mathbb{N} \), constructs the corresponding formula \(\mathcal{F}(n_1, \ldots, n_r)\).
\end{defn}

\begin{defn}\rm
A semialgebraic set \( S \) is said to be \emph{uniformly defined} if there exists a first-order formula in the theory of real closed fields that defines \( S \) independently of the choice of a real closed field. In other words, \( S \) can be defined over any real closed field using the same formula.
\end{defn}

We recall the following result, which establishes the existence of a uniform bound \((t, u)\). Specifically, for any pair of semialgebraic sets with complexity at most \((p, q)\) that are semialgebraically homeomorphic, there exists a semialgebraic homeomorphism between them with complexity bounded by \((t, u)\). For a detailed proof, see Proposition 3.6 in \cite{D}.

\begin{prop}\label{sdf}
Given the integers $n$, $p$ and $q$, there  exists a couple
of integers $(t,u)$ such that for every couple of semialgebraic subsets of  $R^n$ with complexity at most \((p, q)\), semialgebraically homeomorphic, there is a semialgebraic homeomorphism $f$ between them whose graph $\Gamma_f $ with complexity at most \((t, u)\).
\end{prop}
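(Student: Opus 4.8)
The plan is to reduce the statement to a single \emph{universal family} of semialgebraic sets and then feed that family into Hardt's Trivialization Theorem. A set of complexity at most $(p,q)$ in $R^n$ is a Boolean combination of at most $p$ sign conditions imposed on polynomials of degree at most $q$; since the number of monomials of degree at most $q$ in $n$ variables is $N=\binom{n+q}{n}$ and the number of admissible Boolean-and-sign templates is finite, every such set is cut out by one of finitely many formulas $\Phi(x;a)$ whose free coefficient vector $a$ ranges over an affine space $R^{K}$ (with $K=pN$). For each template I would form the universal set
\[
\mathcal{S}=\{(a,x)\in R^{K}\times R^{n}\mid \Phi(x;a)\}
\]
together with the projection $\pi\colon\mathcal{S}\to R^{K}$, $(a,x)\mapsto a$, whose fibre over $a$ is a copy of the set $S_a$ defined by $a$. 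Working with the finitely many templates in turn, this reduces the whole problem to finitely many such projections.

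Next I would apply Hardt's Theorem to $\pi$, obtaining a finite semialgebraic partition $R^{K}=C_1\sqcup\cdots\sqcup C_r$ and trivializations $h_i\colon\pi^{-1}(C_i)\to C_i\times F_i$ with $\mathrm{pr}_1\circ h_i=\pi$. Writing $h_i(a,x)=(a,\phi_{i,a}(x))$, each $\phi_{i,a}\colon S_a\to F_i$ is a semialgebraic homeomorphism whose graph is obtained from the single fixed graph $\Gamma_{h_i}$ by substituting the constant $a$ for the base coordinates and discarding the redundant copy of $a$. Substituting field elements alters neither the number of defining polynomials nor their degrees in the surviving variables, so the complexity of $\phi_{i,a}$ is bounded by that of $h_i$, \emph{uniformly in} $a\in C_i$. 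As $i$ runs through finitely many indices this produces one bound $(t_0,u_0)$ valid for all the maps $\phi_{i,a}$, and simultaneously exhibits the finitely many \emph{model sets} $F_1,\dots,F_r$ as representatives of every topological type occurring in the class.

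I would conclude by routing an arbitrary homeomorphic pair through these models. Since $F_1,\dots,F_r$ are finitely many, I can fix once and for all, for each pair $(F_i,F_j)$ that is semialgebraically homeomorphic, one homeomorphism $\psi_{ij}\colon F_i\to F_j$; let $(t_1,u_1)$ bound the complexities of these finitely many chosen maps. Given $S_a\cong S_b$ in the class, pick the pieces $C_i\ni a$ and $C_j\ni b$; then $F_i\cong S_a\cong S_b\cong F_j$, so $\phi_{j,b}^{-1}\circ\psi_{ij}\circ\phi_{i,a}\colon S_a\to S_b$ is a semialgebraic homeomorphism. Its graph is the projection, eliminating the two intermediate variables, of a semialgebraic set assembled from $\Gamma_{\phi_{i,a}}$, $\Gamma_{\psi_{ij}}$ and $\Gamma_{\phi_{j,b}}$; by the Tarski--Seidenberg Theorem it is semialgebraic, and its complexity is bounded by a fixed function of $(t_0,u_0,t_1,u_1)$. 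Taking the maximum over the finitely many index pairs and templates yields the desired $(t,u)$.

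The main obstacle is the complexity bookkeeping in the last two steps: one must check that substituting a parameter into a fixed semialgebraic graph, and composing semialgebraic maps by eliminating an intermediate variable, both keep the complexity inside a bound depending only on $(n,p,q)$. The substitution step is elementary once one notes that it preserves degrees, whereas the composition step rests on the effective degree estimates underlying quantifier elimination; for the mere \emph{existence} asserted here, however, the finiteness furnished by Hardt's Theorem together with the finitely many templates, pieces and model homeomorphisms already suffices. Finally, independence of the bound from the field $R$ follows because the universal family, the partition, the trivializations and the model homeomorphisms are all cut out by formulas with integer coefficients: the assertion that this finite data forms a valid trivialization realizing the bound $(t,u)$ is a first-order sentence, true over $\mathbb{R}$ and hence, by the Transfer Principle (Theorem \ref{Principle}), over every real closed field.
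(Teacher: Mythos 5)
Your proof is correct and follows essentially the same route as the paper: the paper defers this proposition to \cite{D} (Proposition 3.6), but its own proofs of the analogous statements (e.g.\ Theorem \ref{main-2}) use exactly your strategy of applying Hardt's theorem to the projection of a universal semialgebraic family onto its parameter space and routing an arbitrary homeomorphic pair through the finitely many model fibers. The only details worth tightening are standard: the uniform complexity bound for the composed homeomorphisms rests on effective quantifier elimination, and the final transfer should be carried out sentence-by-sentence in the complexity $(T,U)$ of the hypothesized homeomorphism, since the Hardt partition and trivializations obtained over $\mathbb{R}$ need not be defined with integer coefficients.
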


With a given bound on the degrees of the defining polynomials, a similar argument as in the previous result shows that the family of algebraic sets defined by polynomial equations of degree at most \( d \) can be parameterized by a semialgebraic set. This can be formally stated as follows:

\begin{prop}\label{sap}
Let \( n, d \) be positive integers. There exists a semialgebraic subset \( \mathcal{M}(n, d) \) in some affine space \( \mathbb{R}^{\alpha(n, d)} \) and a semialgebraic family \( \mathcal{K}(n, d) \subseteq \mathcal{M}(n, d) \times \mathbb{R}^n \) such that:

\begin{enumerate}
	\item \textbf{Fiber Property}:  
For every \( a \in \mathcal{M}(n, d) \), the fiber  
\[
V_a(n, d) = \{ x \in R^n \mid (a, x) \in \mathcal{K}(n, d) \}
\]
is an algebraic subset of \( R^n \) with complexity at most \( d \).

	\item \textbf{Uniform Representation}:  
For every algebraic subset \( V \subseteq R^n \) of complexity at most \( d \), there exists an element \( a \in \mathcal{M}(n, d) \) such that \( V = V_a(n, d) \).

	\item \textbf{Uniformity in Construction}:  
The sets \( \mathcal{M}(n, d) \) and \( \mathcal{K}(n, d) \) are defined uniformly by first-order formulas in the theory of real closed fields, without parameters, and can be effectively constructed from \( n, d \).
\end{enumerate}
\end{prop}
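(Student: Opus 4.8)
The plan is to build $\mathcal{M}(n,d)$ explicitly as a space of coefficient vectors and to let $\mathcal{K}(n,d)$ be cut out by the corresponding universal family of polynomial equations. Set $N = \binom{n+d}{n}$, the number of monomials $X_1^{a_1}\cdots X_n^{a_n}$ of total degree at most $d$, and fix an enumeration $m_1,\dots,m_N$ of these monomials (say in degree-lexicographic order). A polynomial of degree at most $d$ is then $\sum_{j=1}^N c_j\, m_j$, recorded by its coefficient vector $(c_1,\dots,c_N)\in R^N$. The whole construction will be phrased over a generic real closed field $R$, so that the same formulas realize $\mathcal{M}(n,d)$ and $\mathcal{K}(n,d)$ uniformly over every such field.

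First I would take $\alpha(n,d) = N^2$ and let $\mathcal{M}(n,d)$ be the full affine space $R^{\alpha(n,d)}$, a point of which I write as an $N$-tuple $a=(c^{(1)},\dots,c^{(N)})$ of coefficient vectors, encoding the polynomials $f^a_i = \sum_{j=1}^N c^{(i)}_j\, m_j$ for $i=1,\dots,N$. I then define
\[
\mathcal{K}(n,d) = \Bigl\{ (a,x)\in R^{\alpha(n,d)}\times R^n \;\Bigm|\; \bigwedge_{i=1}^N f^a_i(x)=0 \Bigr\}.
\]
Because each equation $f^a_i(x)=0$ expands to a polynomial relation with integer coefficients in the combined variables (the entries of $a$ together with $x$), this is a quantifier-free formula without parameters, so $\mathcal{K}(n,d)$ is semialgebraic (indeed algebraic) and is given by a formula constructible algorithmically from $n$ and $d$; likewise $\mathcal{M}(n,d)=R^{\alpha(n,d)}$ and the integer $\alpha(n,d)=N^2$ are computable from $n,d$. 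This settles the uniformity and effectiveness clause (3), and the fiber clause (1) is immediate: for every $a$, the fiber $V_a(n,d)$ is the common zero locus of the $N$ polynomials $f^a_i$, each of degree at most $d$.

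The substantive point is the uniform-representation clause (2): every algebraic set of degree at most $d$ must arise as some fiber, and a priori such a set could require many more than $N$ defining polynomials. The key observation, which I would isolate as the main step, is that the zero locus of a collection of polynomials equals the zero locus of its $R$-linear span. Thus if $V = Z(g_1,\dots,g_m)$ with each $\deg g_i \le d$, then $V = Z(W)$ where $W$ is the linear subspace they span inside the $N$-dimensional space of polynomials of degree at most $d$. Choosing a basis $h_1,\dots,h_r$ of $W$ gives $r \le N$ and $V = Z(h_1,\dots,h_r)$; padding with $N-r$ zero polynomials produces an $N$-tuple whose coefficient vectors define a point $a\in\mathcal{M}(n,d)$ with $V_a(n,d)=V$. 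This argument is purely linear-algebraic and hence valid verbatim over any real closed field $R$, which is precisely the uniformity the statement demands.

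I expect the only genuine obstacle to be this bound on the number of defining polynomials; once it is phrased through the linear span it is routine, and no appeal to Noetherianity or to a sum-of-squares reduction is needed (the latter would also work but would inflate the degree to $2d$). A minor point to handle cleanly is the degenerate fiber $a=0$, for which $V_a(n,d) = R^n$; this is still an algebraic set of degree at most $d$, so it causes no trouble.
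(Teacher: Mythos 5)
Your construction is correct and complete. Note that the paper itself does not prove Proposition~2.4 (it defers to the analogous parameterization in [D] and to the argument sketched for Proposition~\ref{pas}), so the relevant comparison is with that standard coefficient-space construction, which is exactly what you carry out: $\mathcal{M}(n,d)$ a space of coefficient tuples, $\mathcal{K}(n,d)$ the universal zero locus, with clauses (1) and (3) immediate from the integer-coefficient, quantifier-free description. You correctly identify that the only substantive issue is clause (2), namely that the definition of ``degree at most $d$'' bounds the degrees but not the number of defining equations. The classical route (e.g.\ in the proof of Theorem~9.3.5 of [BCR]) handles this by replacing $f_1,\dots,f_m$ with the single polynomial $f_1^2+\cdots+f_m^2$, parameterizing instead by coefficients of one polynomial of degree $\le 2d$; your linear-span argument instead keeps the degree at $d$ at the cost of using $N=\binom{n+d}{n}$ equations. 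Both are valid (the set-theoretic degree is presentation-independent, so the sum-of-squares fiber is still an algebraic set of degree $\le d$), but your version has the mild aesthetic advantage that the fibers are literally presented by degree-$\le d$ equations, matching the Fiber Property as stated without any further remark. The linear-algebra step itself is sound: $Z(g_1,\dots,g_m)=Z(\operatorname{span}(g_1,\dots,g_m))=Z(h_1,\dots,h_r)$ for any basis $h_1,\dots,h_r$, with $r\le N$, and this is field-independent, so uniformity over all real closed fields holds as required.
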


The following proposition can be seen as a special case of Proposition \ref{sdf}. It can be demonstrated using the semialgebraic parameterization of algebraic sets defined by polynomial equations of degrees at most \(d\) (Proposition \ref{sap}) and Hard't's Theorem.

\begin{prop}\label{has}
Given integers \( n \) and \( d \), there exists a pair of integers \((t, u)\) such that for any two algebraic sets in \(\mathcal{M}(n, d)\) that are semialgebraically homeomorphic, there is a semialgebraic homeomorphism \( f \) between them whose graph \(\Gamma_f \) belongs to a semialgebraic set of complexity at most \((t, u)\).
\end{prop}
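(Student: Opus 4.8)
The plan is to apply Hardt's Semialgebraic Trivialization Theorem to the universal family furnished by Proposition~\ref{sap}, and then to patch the resulting fibrewise trivializations together through a fixed finite list of model fibers. Concretely, I would write $\pi \colon \mathcal{K}(n,d) \to \mathcal{M}(n,d)$ for the restriction of the first projection, so that $\pi^{-1}(a) = \{a\} \times V_a(n,d)$ for each parameter $a$, and run the Trivialization Theorem on $\pi$. This produces a finite semialgebraic partition $\mathcal{M}(n,d) = B_1 \sqcup \cdots \sqcup B_k$ and semialgebraic homeomorphisms $h_i \colon \pi^{-1}(B_i) \to B_i \times F_i$ with $\mathrm{pr}_1 \circ h_i = \pi|_{\pi^{-1}(B_i)}$. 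For each $a \in B_i$, slicing $h_i$ at the parameter value $a$ gives a semialgebraic homeomorphism $\theta_{i,a} \colon V_a(n,d) \to F_i$; since slicing merely substitutes the constants $a$ into the defining data of $h_i$, the graph of $\theta_{i,a}$ has complexity bounded by that of $\Gamma_{h_i}$, uniformly over $a \in B_i$.

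Next I would observe that the model fibers $F_1, \ldots, F_k$ already exhaust the topological types in the family, so the problem reduces to comparing them. For every pair $(i,j)$ for which $F_i$ and $F_j$ are semialgebraically homeomorphic I fix, once and for all, a semialgebraic homeomorphism $g_{ij} \colon F_i \to F_j$; this is a finite choice, and the finitely many graphs $\Gamma_{g_{ij}}$ have bounded complexity. Now if $V_a(n,d)$ and $V_b(n,d)$ are semialgebraically homeomorphic with $a \in B_i$ and $b \in B_j$, then composing $V_a \cong F_i$, the given homeomorphism $V_a \cong V_b$, and $V_b \cong F_j$ shows $F_i \cong F_j$, so $g_{ij}$ is available and
\[
f = \theta_{j,b}^{-1} \circ g_{ij} \circ \theta_{i,a} \colon V_a(n,d) \longrightarrow V_b(n,d)
\]
is a semialgebraic homeomorphism between the two prescribed sets.

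Finally, to read off the bound, the decisive point is that this construction is performed uniformly in the parameters. For each fixed pair of blocks $(i,j)$ the graphs $\Gamma_f$, as $(a,b)$ ranges over $B_i \times B_j$, are precisely the slices of a single semialgebraic set
\[
\Gamma^{(i,j)} = \{\, (a,b,x,z) : a \in B_i,\ b \in B_j,\ \exists\, y,w \ \text{with} \ \theta_{i,a}(x)=y,\ g_{ij}(y)=w,\ \theta_{j,b}(z)=w \,\},
\]
which the Tarski--Seidenberg Theorem assembles from the finitely many fixed data $h_i, g_{ij}, h_j$ and which therefore has some complexity $(t_{ij}, u_{ij})$ independent of $(a,b)$. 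Each individual $\Gamma_f$ is the slice of $\Gamma^{(i,j)}$ at $(a,b)$, obtained by substituting constants and hence of complexity at most $(t_{ij}, u_{ij})$, so taking $(t,u) = \max_{i,j}(t_{ij}, u_{ij})$ yields the asserted uniform bound; the mere existence of $(t,u)$ needs only the finiteness of the list of blocks and of the chosen $g_{ij}$. I expect the main obstacle to be exactly this uniformity, and within it the need to compare sets lying over \emph{different} blocks $B_i \neq B_j$: Hardt's theorem only trivializes each block separately, so the comparison of fibers across blocks must be routed through the finite family of model fibers via the fixed homeomorphisms $g_{ij}$. As a shortcut one may instead note that every $V_a(n,d)$ is a semialgebraic set of complexity at most $(1, 2d)$, writing it as $\{\sum_\ell f_\ell^2 = 0\}$, and then invoke Proposition~\ref{sdf} with $(p,q) = (1,2d)$ directly; uniformity across real closed fields is then secured by the Transfer Principle of Theorem~\ref{Principle}.
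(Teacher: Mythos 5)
Your proposal is correct and follows exactly the route the paper indicates: the paper gives no detailed proof of this proposition, only the remark that it is a special case of Proposition~\ref{sdf} and can be demonstrated from the parameterization of Proposition~\ref{sap} together with Hardt's theorem, which is precisely the argument (and the shortcut) you carry out. Your writeup simply supplies the details --- the Hardt partition of $\mathcal{M}(n,d)$, the fixed model fibers $F_i$ and comparison maps $g_{ij}$, and the uniformity of slice complexity --- that the paper leaves implicit.
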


We now present a theorem that establishes a finite classification for real algebraic sets. For a detailed discussion of this result, see Theorem 9.3.5 in \cite{BCR}.

\begin{thm}
For every pair of positive integers \( n, d \), there exist a number \( p = p(n, d) \) and algebraic subsets \( V_1, \ldots, V_p \subseteq R^n \) defined by polynomial equations of degrees at most \( d \), such that, for any algebraic subset \( V \subset R^n \) of degree at most \( d \), there exists \( i \in \{1, \ldots, p\} \) and a semialgebraic homeomorphism \( h: R^n \rightarrow R^n \) with \( h(V_i) = V \).
\end{thm}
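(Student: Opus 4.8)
The plan is to prove finiteness of topological types by a compactness-and-trivialization argument, extracting the finite list $V_1, \ldots, V_p$ from a single application of Hardt's theorem to the universal family $\mathcal{K}(n,d) \to \mathcal{M}(n,d)$ constructed in Proposition \ref{sap}. First I would view the projection $f \colon \mathcal{K}(n,d) \to \mathcal{M}(n,d)$, whose fiber over each parameter $a$ is the algebraic set $V_a(n,d)$. By Proposition \ref{sap}(2), every algebraic subset of $R^n$ of degree at most $d$ appears as such a fiber, so it suffices to bound the number of topological types among the fibers of $f$. Applying Hardt's Semialgebraic Trivialization Theorem to this map produces a finite semialgebraic partition $\mathcal{M}(n,d) = B_1 \cup \cdots \cup B_p$ together with semialgebraic homeomorphisms $h_i \colon f^{-1}(B_i) \to B_i \times F_i$ over each piece. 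The key consequence is that for any two parameters $a, a'$ lying in the same stratum $B_i$, the fibers $V_a(n,d)$ and $V_{a'}(n,d)$ are both semialgebraically homeomorphic to the common model fiber $F_i$, hence to each other.

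Next I would fix, for each stratum $B_i$, a representative parameter $a_i \in B_i$ and set $V_i := V_{a_i}(n,d)$; this yields the finite list $V_1, \ldots, V_p$ with $p = p(n,d)$ the number of strata produced by Hardt's theorem. The remaining task is to upgrade the fiberwise homeomorphism $V_i \cong V$ obtained from trivialization into an \emph{ambient} homeomorphism $h \colon R^n \to R^n$ carrying $V_i$ onto $V$, as the statement demands. For this I would invoke the ``compatibility'' clause of Hardt's theorem, applying the trivialization to the pair $(\mathcal{M}(n,d) \times R^n, \mathcal{K}(n,d))$ so that the trivializing homeomorphism respects both the total space $R^n$ and the subset $V_a$; restricting to a slice then gives a semialgebraic self-homeomorphism of $R^n$ sending $V_i$ to $V$. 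The existence of the extension to an ambient homeomorphism is exactly the content packaged in Theorem 9.3.5 of \cite{BCR}, which I would cite for this extension step.

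Finally I would note that all of this proceeds uniformly: by Proposition \ref{sap}(3) the family $\mathcal{K}(n,d)$ is defined by first-order formulas without parameters, and Hardt's theorem applied to a semialgebraic map defined over $\mathbb{Z}$ produces a partition and trivializations that are themselves semialgebraic and definable by such formulas. Consequently the number $p$ of strata depends only on $n$ and $d$, giving the bound $p = p(n,d)$. The representatives $V_1, \ldots, V_p$ are themselves algebraic sets defined by polynomials of degree at most $d$, since they are fibers of $\mathcal{K}(n,d)$.

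The main obstacle I expect is the passage from the fiberwise homeomorphism supplied by Hardt's theorem to the ambient homeomorphism of $R^n$ required by the conclusion. Hardt's trivialization $h_i \colon f^{-1}(B_i) \to B_i \times F_i$ directly identifies the fibers $V_a$ with a fixed model $F_i$, but the statement asks for a homeomorphism of the ambient space $R^n$ itself carrying one algebraic set onto another. Bridging this gap requires applying the trivialization not to the family of algebraic sets alone but to the trivial family $\mathcal{M}(n,d) \times R^n$ with the distinguished subfamily $\mathcal{K}(n,d)$, so that the compatibility clause yields a homeomorphism of the total ambient space restricting correctly on the subset; this is precisely where Hardt's ``compatible with subsets $A_j$'' refinement does the essential work.
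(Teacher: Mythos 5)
Your proof is correct and follows essentially the same route as the paper, which offers no proof of its own and simply cites \cite{BCR}, Theorem 9.3.5: the argument there is precisely your second pass, namely applying Hardt's theorem to the projection $\mathcal{M}(n,d)\times R^n \to \mathcal{M}(n,d)$ compatibly with the subfamily $\mathcal{K}(n,d)$, so that the slice-wise trivializations yield ambient semialgebraic homeomorphisms of $R^n$ carrying a representative fiber onto any other fiber in the same stratum. The one blemish is that citing Theorem 9.3.5 of \cite{BCR} for the ``extension step'' would be circular, since that is exactly the statement being proved; the compatibility clause of Hardt's theorem, which you already invoke correctly, is all that is needed there.
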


Considering a bound on graph complexities of semialgebraic homeomorphisms, we refine the theorem as follows:

\begin{thm} \label{born}
For every pair of positive integers \( n, d \), there exist integers \( p = p(n, d), t = t(n, d), u = u(n, d) \) and algebraic subsets \( V_1, \ldots, V_p \subseteq R^n \) defined by polynomial equations of degrees at most \( d \), such that, for every algebraic subset \( V \subset R^n \) of degrees at most \( d \), there exists \( i \in \{1, \ldots, p\} \) and a semialgebraic homeomorphism \( h: R^n \rightarrow R^n \) of complexity at most \((t, u)\) satisfying \( h(V_i) = V \).
\end{thm}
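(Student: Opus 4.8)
The plan is to obtain the list $V_1,\dots,V_p$ and the uniform complexity bound $(t,u)$ simultaneously from a single application of Hardt's Theorem to the parametrized family of Proposition \ref{sap}, so that the ambient homeomorphisms are not merely shown to exist (as in the preceding classification theorem) but are produced as specializations of finitely many \emph{fixed} trivialization maps, whose complexity can then be controlled.

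First I would apply Hardt's Semialgebraic Trivialization Theorem to the projection $\mathrm{pr}\colon \mathcal{M}(n,d)\times R^n \to \mathcal{M}(n,d)$, taking $\mathcal{K}(n,d)\subseteq \mathcal{M}(n,d)\times R^n$ as the distinguished subset. This yields a finite semialgebraic partition $\mathcal{M}(n,d)=B_1\sqcup\cdots\sqcup B_p$ together with, over each $B_j$, a semialgebraic trivialization
\[
h_j\colon \mathrm{pr}^{-1}(B_j)=B_j\times R^n \longrightarrow B_j\times F_j,\qquad \mathrm{pr}_1\circ h_j=\mathrm{pr},
\]
compatible with $\mathcal{K}(n,d)$. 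Because $h_j$ respects the first projection it has the form $h_j(a,x)=(a,\psi_{j,a}(x))$ with each $\psi_{j,a}\colon R^n\to F_j$ an ambient homeomorphism, and compatibility with $\mathcal{K}(n,d)$ means $\psi_{j,a}(V_a(n,d))=F'_j$ for a fixed semialgebraic $F'_j\subseteq F_j$ independent of $a\in B_j$. Choosing a base point $a_j\in B_j$ and setting $V_j:=V_{a_j}(n,d)$ fixes the number $p=p(n,d)$ and the representatives. Then for any algebraic $V\subseteq R^n$ of degree at most $d$, Proposition \ref{sap} gives $a$ with $V=V_a(n,d)$; if $a\in B_j$, the composite $h:=\psi_{j,a}^{-1}\circ\psi_{j,a_j}\colon R^n\to R^n$ is an ambient semialgebraic homeomorphism satisfying $h(V_j)=\psi_{j,a}^{-1}(F'_j)=V$.

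The core of the argument is the uniform bound on the complexity of $h$. The key observation is that $\psi_{j,a}$ is the slice at the parameter $a$ of the single fixed semialgebraic map $h_j$: writing the graph of $h_j$ as a Boolean combination of at most $P_j$ polynomial conditions of degree at most $Q_j$, the graph of $\psi_{j,a}$ is obtained by specializing the parameter variables to $a$, an operation that neither increases the number of conditions nor the degree in the remaining variables. Hence $\Gamma_{\psi_{j,a}}$ has complexity at most $(P_j,Q_j)$ uniformly in $a\in B_j$, and the same holds for $\psi_{j,a_j}$ and for the inverse $\psi_{j,a}^{-1}$ (whose graph is the coordinate swap of $\Gamma_{\psi_{j,a}}$). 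Finally, the graph of the composite $h$ is the projection
\[
\Gamma_h=\{(x,y)\in R^n\times R^n \mid \exists z\ (x,z)\in\Gamma_{\psi_{j,a_j}}\ \text{and}\ (z,y)\in\Gamma_{\psi_{j,a}^{-1}}\};
\]
by the effective form of the Tarski--Seidenberg Theorem this is semialgebraic of complexity bounded by a computable function of $(P_j,Q_j)$ and $n$, say $(t_j,u_j)$. Setting $t:=\max_{1\le j\le p}t_j$ and $u:=\max_{1\le j\le p}u_j$ gives a bound valid for every $V$, as required; this argument runs in parallel with the proofs of Propositions \ref{sdf} and \ref{has}, now applied to the ambient homeomorphisms of the pairs $(R^n,V_i)$ and $(R^n,V)$.

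I expect the main obstacle to be precisely this complexity-control step: one must verify that the complexity of the trivialization slices $\psi_{j,a}$ is genuinely independent of $a$ --- which hinges on their all being specializations of the finitely many fixed maps $h_j$ rather than separately chosen homeomorphisms --- and that composition and inversion inflate complexity only by a computable amount, which is where the effective quantifier elimination underlying Tarski--Seidenberg is needed. A secondary point to check carefully is that Hardt's trivializations are arranged to be compatible with $\mathcal{K}(n,d)$, so that the resulting $h$ is an \emph{ambient} homeomorphism carrying $V_j$ onto $V$ and not merely a homeomorphism between the two algebraic sets. Finally, since $\mathcal{M}(n,d)$ and $\mathcal{K}(n,d)$ are defined by parameter-free first-order formulas (Proposition \ref{sap}) and Hardt's Theorem holds over every real closed field, the Transfer Principle (Theorem \ref{Principle}) yields the same integers $p,t,u$ uniformly across all real closed fields.
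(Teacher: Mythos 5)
Your proposal is correct, and it is in substance the argument the paper intends: the paper states Theorem \ref{born} without proof, meaning it to follow from the preceding classification theorem (Theorem 9.3.5 of [BCR]) together with Proposition \ref{has}, and both of those rest on exactly your device, namely Hardt's theorem applied to the universal family $\mathcal{K}(n,d)\subseteq\mathcal{M}(n,d)\times R^n$ of Proposition \ref{sap}. What you supply, and what the paper leaves implicit, is the complexity bookkeeping: every homeomorphism produced by the construction is a parameter slice $\psi_{j,a}$ of one of the \emph{finitely many fixed} trivializations $h_j$, so specialization of the parameter, transposition of the graph, and one application of uniform (effective) quantifier elimination for the composite $\psi_{j,a}^{-1}\circ\psi_{j,a_j}$ give a bound $(t,u)$ depending only on $n$ and $d$ --- this is precisely the content hiding behind the citations of Propositions \ref{sdf} and \ref{has}. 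Your version also quietly repairs a small mismatch in the paper's implicit derivation: Propositions \ref{sdf} and \ref{has} as stated bound the complexity of a homeomorphism \emph{between} the two algebraic sets, whereas Theorem \ref{born} requires an \emph{ambient} homeomorphism of $R^n$ carrying $V_i$ onto $V$; by extracting $h$ directly from the Hardt trivialization, which is ambient by construction and respects $\mathcal{K}(n,d)$ by the compatibility clause, you obtain the pair version for free. One minor remark: your closing appeal to the Transfer Principle to equalize $p,t,u$ across real closed fields presupposes that the full statement has been encoded as a single first-order sentence over $\mathbb{Z}$ (which the paper only carries out later, in the proof of Theorem \ref{main}); for Theorem \ref{born} itself this step is unnecessary, since your construction already runs verbatim over an arbitrary real closed field.
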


\begin{thm} \label{main}
The bounds \( p(n, d), t(n, d), \) and \( u(n, d) \) can be computed effectively.
\end{thm}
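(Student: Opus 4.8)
The plan is to upgrade the pure existence statement of Theorem~\ref{born} to an effective one by encoding the assertion ``the triple $(p,t,u)$ is admissible for $(n,d)$'' as a single first-order sentence in the language of ordered fields, and then deciding that sentence by Tarski's Decidability Theorem (Theorem~\ref{decidability}). Concretely, I would fix $n,d$ and, for each candidate triple $(p,t,u)\in\mathbb{N}^3$, build a sentence $\Phi(n,d,p,t,u)$ that is true precisely when there exist $p$ model sets of degree at most $d$ such that every algebraic subset of $R^n$ of degree at most $d$ is carried onto one of them by a semialgebraic homeomorphism of complexity at most $(t,u)$. Running the decision procedure on these sentences and searching over $(p,t,u)$ then yields the minimal admissible triple; Theorem~\ref{born} guarantees that at least one admissible triple exists, so the search terminates.

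The main work is the construction of $\Phi(n,d,p,t,u)$ and the verification that it is genuinely first-order. For the model sets I would quantify existentially over parameters $a_1,\dots,a_p\in\mathcal{M}(n,d)$, using the semialgebraic parameterization of Proposition~\ref{sap}; each $V_{a_i}$ is then the fiber of $\mathcal{K}(n,d)$ over $a_i$ and is uniformly first-order definable from $a_i$. The clause ``for every algebraic $V$ of degree at most $d$'' becomes a universal quantifier over a parameter $b\in\mathcal{M}(n,d)$, again by Proposition~\ref{sap}. The delicate clause is ``there exists a semialgebraic homeomorphism $h$ of complexity at most $(t,u)$ with $h(V_{a_i})=V_b$''. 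Here I would exploit that a semialgebraic subset of $R^n\times R^n$ of complexity at most $(t,u)$ is described by at most $t$ polynomial (in)equalities of degree at most $u$ in a fixed Boolean pattern: there are only finitely many such combinatorial patterns, and within each pattern the set is determined by the finite coefficient vector of the participating polynomials. Thus the existential quantifier ``there is a homeomorphism of bounded complexity'' unwinds into a finite disjunction over patterns together with an ordinary existential quantifier over a tuple of field elements, exactly as in Propositions~\ref{sdf} and~\ref{has}. The predicates ``$G$ is the graph of a total single-valued function $R^n\to R^n$'', ``that function is a continuous bijection with continuous inverse'', and ``$h(V_{a_i})=V_b$'' are all expressible by bounded first-order formulas over a real closed field (continuity via the usual $\forall\varepsilon\,\exists\delta$ condition), so $\Phi(n,d,p,t,u)$ is a first-order sentence with integer constants only.

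With $\Phi$ in hand the effectiveness is immediate. Since the data $\mathcal{M}(n,d)$ and $\mathcal{K}(n,d)$ are effectively constructible from $n,d$ (Proposition~\ref{sap}) and the bounded-complexity patterns are enumerable from $(t,u)$, the family $\{\Phi(n,d,p,t,u)\}$ is effective: there is an algorithm that writes down each sentence. By the Transfer Principle (Theorem~\ref{Principle}) the truth value of $\Phi(n,d,p,t,u)$ is independent of the chosen real closed field, so it suffices to decide it over $\mathbb{R}$, which Tarski's Decidability Theorem does algorithmically. I would then enumerate the triples $(p,t,u)$, say in order of increasing $p+t+u$ breaking ties lexicographically, decide $\Phi$ for each, and output the first triple for which $\Phi$ holds. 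Termination is assured by Theorem~\ref{born}, and minimality of the returned triple follows from the enumeration order.

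The step I expect to be the genuine obstacle is the uniform first-order encoding of ``there exists a semialgebraic homeomorphism of complexity at most $(t,u)$'': one must be careful that the passage from ``a homeomorphism exists'' to ``a coefficient tuple exists'' is faithful, i.e.\ that every bounded-complexity semialgebraic homeomorphism is captured by some pattern-plus-coefficients and, conversely, that the first-order homeomorphism predicate cuts out exactly the admissible coefficient tuples. Once this equivalence is established---and it is precisely the content underlying Propositions~\ref{sdf} and~\ref{has}---the reduction to a decidable sentence, and hence the effectivity of $p(n,d),t(n,d),u(n,d)$, follows with no further difficulty.
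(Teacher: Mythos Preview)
Your proposal is correct and follows essentially the same route as the paper: encode the admissibility of a triple $(p,t,u)$ as an effective family of first-order sentences using the parameterizations of Propositions~\ref{sap} and~\ref{has}, invoke Tarski's decidability to test each sentence, and search over triples with termination guaranteed by Theorem~\ref{born}. The only cosmetic difference is that the paper increments $p,t,u$ simultaneously along the diagonal rather than enumerating by $p+t+u$, so it does not claim any minimality for the returned triple.
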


\begin{proof}
Given \( n \) and \( d \), there exist positive integers \( p, t, \) and \( u \) such that the following statement, denoted by \( \Phi(n, d, p) \), holds over the real numbers \( \mathbb{R} \):

\begin{quote}
\enquote{\emph{There exist \( p \) algebraic subsets \( V_1, \ldots, V_p \subseteq \mathbb{R}^n \) defined by polynomial equations of degree at most \( d \) such that for any algebraic subset \( V \subset \mathbb{R}^n \) defined by polynomial equations of degree at most \( d \), there exist \( i \in \{1, \ldots, p\} \) and a semialgebraic homeomorphism \( h : \mathbb{R}^n \rightarrow \mathbb{R}^n \) of complexity at most \((t, u)\) with \( h(V_i) = V \).}}
\end{quote}
We want to express \( \Phi(n, d, p) \) as a first-order formula in the language of real closed fields. To do so, we rely on the following key results:
\begin{enumerate}

	\item \textbf{Uniform Parameterization of Algebraic Sets}:  
   By Proposition \ref{sap}, the set of algebraic subsets of \( R^n \) defined by polynomial equations of degree at most \( d \) can be parameterized by a semialgebraic set denoted by \( \mathcal{M}(n, d) \). This set \( \mathcal{M}(n, d) \) is defined uniformly by a first-order formula of the theory of real closed fields, without reference to any particular real closed field. Thus, it can be effectively constructed from \( n \) and \( d \).

	\item \textbf{Semialgebraic Homeomorphisms}:  
  The property of two algebraic sets being semialgebraically homeomorphic can also be expressed using a first-order formula. By Proposition \ref{has}, there exists a semialgebraic set, denoted by \(\mathcal{SH}(n, d, t, u)\), defined in \(\mathcal{M}(n, d) \times \mathcal{M}(n, d) \times \mathcal{A}(2n, t, u)\), such that for any \((a, b, h) \in \mathcal{M}(n, d) \times \mathcal{M}(n, d) \times \mathcal{A}(2n, t, u)\), the element \((a, b, h)\) is in \(\mathcal{SH}(n, d, t, u)\) if and only if \( h : \mathbb{R}^n \rightarrow \mathbb{R}^n \) is a semialgebraic homeomorphism satisfying \( h(V_a) = V_b \).

\end{enumerate}
Given the above observations, we can translate \( \Phi(n, d, p) \) into a first order formula of the theory of real closed fields:

\[
\Phi(n, d, p, t, u) := \exists (a_1, \ldots, a_p) \in \mathcal{M}(n, d)^p \, \forall b \in \mathcal{M}(n, d) \, \bigvee_{i=1}^p \, (a_i, b, h) \in \mathcal{SH}(n, d, t, u),
\]
where \( a_1, \ldots, a_p \) are \( p \) distinct algebraic sets in \( \mathcal{M}(n, d) \), and for any algebraic subset \( V_b \in \mathcal{M}(n, d) \), there is some \( a_i \) and a homeomorphism \( h \) of bounded complexity such that \( h(V_i) = V_b \).

% An Algorithm to Find \( p, t, \) and \( u \)
Now, using the decidability of the theory of real closed fields (Tarski’s theorem), we can devise an algorithm to find appropriate values for \( p, t, \) and \( u \):

\begin{enumerate}
	\item \textbf{Initialization}: Set \( p := 1, \, t := 1, \, u := 1 \).

	\item \textbf{Incremental Search}: While \( \Phi(n, d, p, t, u) \) is not true in the theory of real closed fields, increment \( p, t, \) and \( u \) simultaneously:

\[
p := p + 1, \quad t := t + 1, \quad u := u + 1.
\]

	\item \textbf{Termination}: Since \( \Phi(n, d, p, t, u) \) is known to hold for some finite values (by Theorem 2.2), this algorithm must terminate.
\end{enumerate}

This process effectively determines a recursive function \( (n, d) \mapsto (p, t, u) \). Hence, the bounds \( p(n, d), t(n, d), \) and \( u(n, d) \) are effective, completing the proof.
\end{proof}

\section{Generalization of Effectiveness of the Number of Topological Types for Semi-Algebraic sets}

In this section, we demonstrate that the number of topological types of semialgebraic subsets of \(\mathbb{R}^n\) with complexity at most \((p, d)\) can be effectively bounded. This extends the Theorem \ref{main} and \ref{born} on the existence of effective bounds for the number of topological classes of algebraic subsets defined by polynomial equations of degree at most \(d\).

We begin by recalling a proposition that provides a semialgebraic parametrization for subsets of \(\mathbb{R}^n\) with maximal complexity \((p, q)\). A detailed proof can be found in [D, Proposition 3.2].

\begin{prop}\label{pas}
Let \( n, p, \) and \( q \) be positive integers. There exists a semi-algebraic subset \( \mathcal{A}(n, p, q) \) in some affine space \( \mathbb{R}^{\beta(n, p, q)} \) and a semi-algebraic family \( \mathcal{S}(n, p, q) \subseteq \mathcal{A}(n, p, q) \times \mathbb{R}^n \) such that:
\begin{enumerate}
	\item \textbf{Fiber Property}:
	
   For every \( a \in \mathcal{A}(n, p, q) \), the fiber
   \[
   S_a(n, p, q) = \{ x \in \mathbb{R}^n \mid (a, x) \in \mathcal{S}(n, p, q) \}
   \]
   is a semi-algebraic subset of \( \mathbb{R}^n \) with complexity at most \( (p, q) \).

	\item \textbf{Uniform Representation}:
	
   For every semi-algebraic subset \( S \subseteq \mathbb{R}^n \) of complexity at most \( (p, q) \), there exists an element \( a \in \mathcal{A}(n, p, q) \) such that \( S = S_a(n, p, q) \).

	\item  \textbf{Uniformity in Construction}:
   The set \( \mathcal{A}(n, p, q) \) and the family \( \mathcal{S}(n, p, q) \) are defined in a uniform way by first-order formulas of the theory of real closed fields, without parameters, which can be effectively constructed from \( n, p, q \).
\end{enumerate}

\end{prop}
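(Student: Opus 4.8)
The plan is to encode each complexity-$(p,q)$ semialgebraic set as a fiber over a parameter that records both the coefficients of its defining polynomials and the finite combinatorial data of its Boolean structure; this is the natural semialgebraic analogue of the purely algebraic parameterization of Proposition~\ref{sap}, the only new ingredient being the bookkeeping for inequalities and Boolean combinations.

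First I would fix the coefficient data. Let $N = \binom{n+q}{n}$ be the number of monomials $m_1, \dots, m_N$ of degree at most $q$ in $X_1, \dots, X_n$, and write $P_c(X) = \sum_{l=1}^N c_l\, m_l(X)$ for the generic polynomial attached to a coefficient vector $c \in R^N$. A tuple of $p$ such polynomials is then encoded by a point $\mathbf{c} = (c^{(1)}, \dots, c^{(p)}) \in R^{pN}$, yielding generic polynomials $P_1, \dots, P_p$, and substituting the monomial expansion $P_l(x) = \sum_j c^{(l)}_j m_j(x)$ turns every atomic condition $P_l(x) = 0$ or $P_l(x) > 0$ into a polynomial (in)equality in the joint variables $(\mathbf{c}, x)$.

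Next I would enumerate the Boolean structures. Recalling that complexity at most $(p,q)$ means a description with $\sum_i k_i \le p$ atomic conditions of degree at most $q$, I regard a \emph{Boolean type} $\tau$ as an assignment to each slot $l \in \{1, \dots, p\}$ of either the label ``unused'' or a pair consisting of a disjunct index and a sign $\ast_l \in \{=, >\}$. There are only finitely many such assignments, say $\tau_1, \dots, \tau_T$ with $T = T(p)$ depending on $p$ alone, and each determines a quantifier-free formula
\[
\Psi_\tau(\mathbf{c}, x) \;=\; \bigvee_{i}\ \bigwedge_{l \text{ in disjunct } i} \big( P_l(x)\, \ast_l\, 0 \big),
\]
the degenerate cases (empty set, whole space) being absorbed by the usual conventions on empty unions and intersections. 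I would then set $\mathcal{A}(n,p,q) = R^{pN} \times \{1, \dots, T\}$, so that $\beta(n,p,q) = pN + 1$, the finite index set being cut out without parameters by $\prod_{k=1}^T (z - k) = 0$, and define the family by a single parameter-free, quantifier-free formula
\[
\mathcal{S}(n,p,q) \;=\; \Big\{ (\mathbf{c}, z, x) \;:\; \bigvee_{k=1}^{T} \big( z = k \ \wedge\ \Psi_{\tau_k}(\mathbf{c}, x) \big) \Big\}.
\]

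Finally I would verify the three asserted properties. The fiber over $(\mathbf{c}, k)$ is exactly the set defined by $\Psi_{\tau_k}(\mathbf{c}, \cdot)$, a Boolean combination of at most $p$ polynomial conditions of degree at most $q$, hence of complexity at most $(p,q)$; conversely any complexity-$(p,q)$ set comes with some such description, which fixes a type $\tau_k$ and a coefficient vector $\mathbf{c}$ (padding unused slots harmlessly), so it is realized as a fiber; and the data $N$, $T$, and the formulas $\Psi_{\tau_k}$ are manifestly computable from $n, p, q$ in the pure language of ordered fields, giving the required uniformity across real closed fields. The only genuinely delicate point is the \emph{exhaustiveness} claim: one must check that the finite enumeration of Boolean types really captures every admissible description allowed by the complexity definition (all choices of $s$, of the $k_i$ with $\sum k_i \le p$, and of the signs), rather than the routine verification that each fiber has the right complexity. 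Everything else is bookkeeping.
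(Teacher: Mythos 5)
Your construction is correct and is essentially the standard one: the paper itself gives no proof of Proposition~\ref{pas} (it defers to [D, Proposition 3.2]), but parameterizing by coefficient vectors of $p$ generic degree-$q$ polynomials together with a finite enumeration of Boolean formats is exactly the argument the paper gestures at for its algebraic analogue (Proposition~\ref{sap}) and the one the cited reference carries out. The one point to state explicitly is that the disjunction in $\Psi_\tau$ ranges only over the disjunct indices actually occupied by some slot of $\tau$ (so no vacuous conjunct silently makes the fiber all of $R^n$); with that reading, your exhaustiveness check and the fiber-complexity bound both go through.
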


We now establish the finiteness of topological types for semialgebraic sets with bounded complexity.

\begin{thm}\label{main-2}
Let \( n, p \) and \( q \) be positive integers. Then there are positive integers \( t = t(n, p, q), u = u(n, p, q), v = v(n, p, q)\) and a collection of semi-algebraic subsets \( S_1, \ldots, S_t \subseteq \mathbb{R}^n \) of complexity at most \( (p, q) \), such that for any semi-algebraic subset \( S \subseteq \mathbb{R}^n \)  of complexity at most \((p, q) \), there exists an index \( i \in \{1, \ldots, t\} \) and a semi-algebraic homeomorphism \( h : \mathbb{R}^n \rightarrow \mathbb{R}^n \) of complexity at most \(u, v\) such that \( h(S_i) = S \).
\end{thm}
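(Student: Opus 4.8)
The plan is to transport the argument used in the algebraic case (Theorems \ref{born} and \ref{main}) to the semialgebraic setting, replacing the parameterization of algebraic sets of degree at most \( d \) (Proposition \ref{sap}) by the parameterization of semialgebraic sets of complexity at most \( (p,q) \) furnished by Proposition \ref{pas}. The finiteness of the list \( S_1,\dots,S_t \) will come from Hardt's Semialgebraic Trivialization Theorem applied to the universal family, while the uniform complexity bound \( (u,v) \) on the homeomorphisms will come from Proposition \ref{sdf} (equivalently, from slicing the Hardt trivializations fiberwise). First I would invoke Proposition \ref{pas} to obtain the semialgebraic parameter space \( \mathcal{A}(n,p,q)\subseteq\mathbb{R}^{\beta(n,p,q)} \) together with the universal family \( \mathcal{S}(n,p,q)\subseteq\mathcal{A}(n,p,q)\times\mathbb{R}^n \), whose fiber \( S_a \) over \( a \) ranges over all semialgebraic subsets of \( \mathbb{R}^n \) of complexity at most \( (p,q) \), and such that every such \( S \) occurs as some \( S_a \) by the Uniform Representation property.

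Next I would apply Hardt's theorem to the trivial projection \( \pi:\mathcal{A}(n,p,q)\times\mathbb{R}^n\to\mathcal{A}(n,p,q) \), taking the distinguished subset to be \( \mathcal{S}(n,p,q) \). This yields a finite semialgebraic partition \( \mathcal{A}(n,p,q)=B_1\sqcup\cdots\sqcup B_t \) and, over each \( B_j \), a semialgebraic homeomorphism \( \theta_j: B_j\times\mathbb{R}^n\to B_j\times\mathbb{R}^n \) commuting with \( \pi \) and compatible with \( \mathcal{S}(n,p,q) \). Fixing a base point \( b_j\in B_j \), compatibility means that for every \( a\in B_j \) the fiber map \( h_a:=\theta_j|_{\{a\}\times\mathbb{R}^n} \) is an ambient semialgebraic homeomorphism of \( \mathbb{R}^n \) with \( h_a(S_a)=S_{b_j} \). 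Setting \( S_j:=S_{b_j} \) produces the required finite list, and the number \( t=t(n,p,q) \) of pieces bounds the number of topological types. Given any semialgebraic \( S \) of complexity at most \( (p,q) \), Uniform Representation gives \( S=S_a \) with \( a\in B_j \) for some \( j \), whence \( h_a^{-1} \) is an ambient semialgebraic homeomorphism with \( h_a^{-1}(S_j)=S \).

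For the complexity bound I would observe that the graph of \( h_a \) is the slice \( \{(x,y): (a,x,y)\in\Gamma_{\theta_j}\} \) of the fixed semialgebraic graph \( \Gamma_{\theta_j} \); substituting the constant parameter \( a \) neither increases the number of defining conditions nor raises the degrees, so each \( h_a \) has complexity bounded by that of \( \theta_j \). Taking \( (u,v) \) to be the maximum of the finitely many complexities of \( \theta_1,\dots,\theta_t \) yields a bound independent of \( S \); alternatively, this uniformity is precisely the content of Proposition \ref{sdf} applied to the pairs \( (S_j,S) \), both of complexity at most \( (p,q) \).

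The main obstacle will be the bookkeeping needed to guarantee that the trivializations are genuinely \emph{ambient} — that is, that we recover homeomorphisms \( \mathbb{R}^n\to\mathbb{R}^n \) rather than merely homeomorphisms \( S_j\to S \) — which is why Hardt's theorem must be applied to the full product family \( \mathcal{A}(n,p,q)\times\mathbb{R}^n \) with \( \mathcal{S}(n,p,q) \) as a distinguished subset, and not to the restricted projection \( \mathcal{S}(n,p,q)\to\mathcal{A}(n,p,q) \) alone. The remaining delicate point is verifying that the fiberwise complexity of the \( \theta_j \) is uniform across the continuum of parameters \( a\in B_j \); this is exactly what the slice argument secures, and it is the step that makes \( (u,v) \) independent of the chosen set \( S \).
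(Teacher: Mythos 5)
Your proposal is correct and follows the same overall strategy as the paper: parameterize the semialgebraic sets of complexity at most $(p,q)$ via Proposition \ref{pas}, apply Hardt's theorem to extract a finite partition of the parameter space $\mathcal{A}(n,p,q)$ giving the finiteness of topological types, and invoke Proposition \ref{sdf} for the uniform complexity bound $(u,v)$ on the homeomorphisms. The one substantive difference is the map to which you apply Hardt's theorem: the paper trivializes the restricted projection $\Pi|_{S(n,p,q)}: S(n,p,q) \rightarrow \mathcal{A}(n,p,q)$, which on its face only produces fiberwise homeomorphisms $S_a \rightarrow S_{b_j}$ between the sets themselves, whereas you trivialize the full product projection $\mathcal{A}(n,p,q)\times\mathbb{R}^n \rightarrow \mathcal{A}(n,p,q)$ with $\mathcal{S}(n,p,q)$ as a distinguished compatible subset, which is what actually delivers the \emph{ambient} homeomorphisms $h:\mathbb{R}^n\rightarrow\mathbb{R}^n$ with $h(S_i)=S$ that the theorem statement demands. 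Your version is therefore the more careful reading of the argument, and your explicit identification of this point (together with the cleaner notation separating the partition pieces $B_j$ of the parameter space from the model sets $S_j=S_{b_j}$, which the paper conflates) is a genuine improvement in precision; the slice argument you offer for the fiberwise complexity is a reasonable alternative to citing Proposition \ref{sdf}, though the citation alone suffices and is what the paper does.
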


\begin{proof}
By Proposition \ref{pas}, there exists a semialgebraic subset \( \mathcal{A}(n, p, q) \) in some affine space \( \mathbb{R}^{\beta(n, p, q)} \) and a semialgebraic family \( \mathcal{S}(n, p, q) \subseteq \mathcal{A}(n, p, q) \times \mathbb{R}^n \) such that 
\[
S(n, p, q) = \{ (a, x) \in \mathcal{A}(n, p, q) \times \mathbb{R}^n : x \in S_a \},
\]
where \( S_a \) is a semialgebraic subset of \( \mathbb{R}^n \) parametrized by \( a \in \mathcal{A}(n, p, q) \). The set \( S(n, p, q) \) is therefore a semialgebraic subset of \( \mathbb{R}^{\alpha(n, p, q)} \times \mathbb{R}^n \).

Consider the projection map:
\[
\Pi: \mathbb{R}^{\beta(n, p, q)} \times \mathbb{R}^n \rightarrow \mathbb{R}^{\alpha(n, p, q)}, \quad (a, x) \mapsto a.
\]
The restriction of \( \Pi \) to \( S(n, p, q) \),
\[
\Pi|_{S(n, p, q)} : S(n, p, q) \rightarrow \mathcal{A}(n, p, q),
\]
is a semialgebraic map. By applying the Hardt trivialization theorem to \( \Pi|_{S(n, p, q)} \), we obtain a finite semialgebraic partition of \( \mathcal{A}(n, p, q) \) into semialgebraic subsets \( S_1, \ldots, S_t \) such that \( \Pi|_{S(n, p, q)} \) is semialgebraically trivial over each \( S_i \) for \( i = 1, \ldots, t \). This establishes the existence of a finite integer \( t \) corresponding to the number of semialgebraic pieces.

The existence of the integers \( u \) and \( v \) follows from Proposition \ref{sdf}, which completes the proof.

\end{proof}

We conclude this section by generalizing the previous theorem to any closed real field \(R\), establishing that the number of topological types of algebraic sets defined by polynomial equations of degree at most \(d\) can be effectively bounded.

\begin{thm} Le \(R\) be a real closed field.
\begin{enumerate}
	\item Let \( n, p \) and \( q \) be positive integers. Then there are positive integers \( t = t(n, p, q), u = u(n, p, q), v = v(n, p, q)\) and a collection of semi-algebraic subsets \( S_1, \ldots, S_t \subseteq R^n \) of complexity at most \( (p, q) \), such that for any semi-algebraic subset \( S \subseteq R^n \)  of complexity at most \((p, q) \), there exists an index \( i \in \{1, \ldots, t\} \) and a semi-algebraic homeomorphism \( h : R^n \rightarrow R^n \) of complexity at most \(u, v\) such that \( h(S_i) = S \).
	\item The bounds \( t = t(n, p, q), u = u(n, p, q), v = v(n, p, q)\) are effective.

\end{enumerate}

\end{thm}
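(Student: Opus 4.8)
The plan is to prove the final theorem by combining the over-$\mathbb{R}$ result (Theorem \ref{main-2}) with the Transfer Principle (Theorem \ref{Principle}), exactly mirroring the strategy already used in the proof of Theorem \ref{main}. The two parts are handled separately but by the same mechanism.

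For part (1), I would first observe that the entire statement to be transferred is a first-order assertion in the language of real closed fields with integer coefficients. Concretely, I would fix a target complexity $(p,q)$ and, for candidate values of the bounds $t,u,v$, form the sentence
\[
\Psi(n,p,q,t,u,v) := \exists (a_1,\ldots,a_t)\in \mathcal{A}(n,p,q)^t\;\forall b\in \mathcal{A}(n,p,q)\;\bigvee_{i=1}^{t}(a_i,b,h)\in \mathcal{SH},
\]
where the parametrizing set $\mathcal{A}(n,p,q)$ and the homeomorphism-relation $\mathcal{SH}$ are the uniformly definable, parameter-free objects supplied by Proposition \ref{pas} and Proposition \ref{sdf}. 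The key point I would emphasize is that, by the uniformity clauses (item 3 of Proposition \ref{pas} and the parameter-free construction underlying Proposition \ref{sdf}), these defining formulas contain no field-specific parameters, so $\Psi$ is genuinely a sentence with coefficients in $\mathbb{Z}$. By Theorem \ref{main-2}, there exist finite values of $t,u,v$ for which $\mathbb{R}\models \Psi(n,p,q,t,u,v)$. Theorem \ref{Principle} then yields $R\models \Psi(n,p,q,t,u,v)$ for every real closed field $R$, which is precisely the desired classification over $R$ with the same bounds $t,u,v$.

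For part (2), I would run the identical decision-procedure argument as in the proof of Theorem \ref{main}: initialize $t=u=v=1$, test the sentence $\Psi(n,p,q,t,u,v)$ for truth over $\mathbb{R}$ using Tarski's Decidability Theorem (Theorem \ref{decidability}), and increment $t,u,v$ until a true instance is found. Termination is guaranteed because part (1) (via Theorem \ref{main-2}) certifies that a valid triple exists. Since each test is decidable and the search halts, the map $(n,p,q)\mapsto(t,u,v)$ is recursive, establishing effectiveness. I would stress that the transfer principle makes the computed bounds simultaneously valid over \emph{every} real closed field, so the single algorithm serves all $R$ at once.

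The main obstacle — and the step I would treat most carefully — is verifying that $\Psi$ is truly parameter-free and first-order of bounded quantifier structure independent of $R$. The subtlety is that ``$h$ is a homeomorphism with $h(S_i)=S$'' must be encoded via the graph $\Gamma_h$ of complexity at most $(u,v)$ and expressed through $\mathcal{SH}$ uniformly; continuity, bijectivity, and the image condition all need to be written as bounded-complexity semialgebraic conditions so that the whole assertion stays within a single formula schema that the transfer principle can act on. Once the uniform definability furnished by Propositions \ref{pas} and \ref{sdf} is invoked to guarantee this, the remaining steps are routine applications of Theorems \ref{Principle} and \ref{decidability}.
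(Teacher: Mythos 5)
Your proposal is correct and follows essentially the same route as the paper: part (1) is obtained by expressing the classification statement as a parameter-free first-order sentence via Propositions \ref{pas} and \ref{sdf}, invoking Theorem \ref{main-2} over $\mathbb{R}$, and transferring to an arbitrary real closed field by Theorem \ref{Principle}; part (2) is the same incremental decidability search as in the proof of Theorem \ref{main}. Your explicit write-out of the sentence $\Psi$ and your attention to its parameter-freeness are slightly more detailed than the paper's sketch, but the argument is the same.
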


\begin{proof}
Given \( n, p, \) and \( q \), there exist integers \( t, u, \) and \( v \in \mathbb{N} \) such that the following statement \(\Psi(n, d, p, t, u)\) holds over \(\mathbb{R}\):

\begin{quote}
\emph{There are \( t \) semialgebraic subsets \( S_1, \ldots, S_t \subseteq \mathbb{R}^n \) of complexity at most \( (p, q) \), such that for any semialgebraic subset \( S \subseteq \mathbb{R}^n \) of complexity at most \( (p, q) \), there exists an index \( i \in \{1, \ldots, t\} \) and a semialgebraic homeomorphism \( h : \mathbb{R}^n \rightarrow \mathbb{R}^n \) of complexity at most \( u, v \) such that \( h(S_i) = S \).}
\end{quote}

Using Proposition \ref{pas} and Proposition \ref{sdf}, the statement \(\Psi(n, d, p, t, u)\) can be expressed as a first-order formula with coefficients in \(\mathbb{Z}\) in the theory of real closed fields. Since \(\mathbb{R} \models \Psi(n, d, p, t, u)\), by the Tarski Tranfer Principle, Proposition \ref{Principle}, \(R \models \Psi(n, d, p, t, u)\), which prove the first point of the Theorem.
Using the decidability of the first-order theory of real closed fields, Theorem \ref{decidability}, in  a similar process as in the final part of the proof of Theorem \ref{main}, we establish the computability of the integers \( t = t(n, p, q), \, u = u(n, p, q), \) and \( v = v(n, p, q) \) in terms of \( n, p, \) and \( q \).

This concludes the proof.
\end{proof}

\end{document}